\newtheorem{theorem}{Theorem}[section]
\newtheorem{lemma}[theorem]{Lemma}
\newtheorem{proposition}[theorem]{Proposition}
\newtheorem{conjecture}[theorem]{Conjecture}
\newtheorem{question}[theorem]{Question}
\theoremstyle{definition}
\theoremstyle{remark}
\newtheorem{remark}[theorem]{Remark}
\DeclarePairedDelimiter\floor{\lfloor}{\rfloor}
\begin{document}

\title[Counterexamples to the quadrisecant approximation conjecture]
{Counterexamples to the quadrisecant approximation conjecture}

\author{Sheng Bai}
\address{LMAM, School of Mathematical Sciences, Peking University, Beijing 100871, China}
\email{barries@163.com}

\author{Chao Wang}
\address{School of Mathematical Sciences, University of
Science and Technology of China, Hefei 230026, China}
\email{chao\_{}wang\_{}1987@126.com}

\author{Jiajun Wang}
\address{LMAM, School of Mathematical Sciences, Peking University, Beijing 100871, China}
\email{wjiajun@pku.edu.cn}

\subjclass[2010]{57M25}

\keywords{quadrisecant, trivial knot, trefoil knot, polygonal knot, edge number}

\thanks{The second named author was partially supported by Project Funded by China Postdoctoral Science Foundation (Grant No. 2015M571929). The third named author is partially supported by NSFC 11425102.}

\begin{abstract}
A quadrisecant of a knot is a straight line intersecting the knot at four points. If a knot has finitely many quadrisecants, one can replace each subarc between two adjacent secant points by the line segment between them to get the quadrisecant approximation of the original knot. It was conjectured that the quadrisecant approximation is always a knot with the same knot type as the original knot. We show that every knot type contains two knots, the quadrisecant approximation of one knot has self intersections while the quadrisecant approximation of the other knot is a knot with different knot type.
\end{abstract}

\date{}
\maketitle

\section{Introduction}

A quadrisecant for a knot is a line that intersects the knot in four points. In 1933, Pannwitz \cite{Pannwitz:1933ks} showed that a generic polygonal knot in any nontrivial knot type must have at least two quadrisecants. This result was extended to smooth knots by Morton and Mond \cite{Morton:1982dn} and to tame knots by Kuperberg \cite{Kuperberg:1994et}. Quadrisecants was used to give lower bounds of the ropelength of a knot in \cite{Denne:2006cz}.

The quadrisecant approximation of a knot was introduced by Jin in \cite{Jin:2005}.  For a knot $K$ with finitely many quadrisecants, let $W$ be the set of intersection points of $K$ with the quadrisecants. The \emph{quadrisecant approximation} of $K$, denoted by $\widehat{K}$, is obtained from $K$ by replacing each subarc of $K$ between two points in $W$ adjacent along $K$ with the straight line segment between them. If $K$ has no quadrisecants (which has to be an unknot), we let $\widehat{K}=K$. A knot as the union of finitely many line segments is called a polygonal knot. It was showed in \cite{Jin:2005} that almost every polygonal knot has only finitely many quadrisecants (see also \cite{Budney:2005ht, CruzCota:2015uv}). In every knot type, there is a polygonal knot such that the quadrisecant approximation has the same knot type. In fact, the following conjecture was proposed in the same paper:

\begin{conjecture}[The quadrisecant approximation conjecture]\label{conj:ori}
If $K$ has finitely many quadrisecants, then its quadrisecant approximation $\widehat{K}$ has the same knot type as $K$. Furthermore $K$ and $\widehat{K}$ have the same set of quadrisecants.
\end{conjecture}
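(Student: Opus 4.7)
The paper's title and abstract declare that Conjecture~\ref{conj:ori} is \emph{false}, so the plan is to construct counterexamples in every knot type. For a given knot type $\tau$ we want to produce two polygonal representatives $K_1,K_2$ of type $\tau$, each with only finitely many quadrisecants, such that $\widehat{K_1}$ is not embedded and $\widehat{K_2}$ is an embedded polygon whose knot type is different from $\tau$.

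The first move is to localize the problem: it should suffice to build polygonal \emph{unknots} $U_1,U_2$ realizing the two pathologies, and then obtain an example in any other type $\tau$ by a connected sum construction. Concretely, take a polygonal representative $J$ of $\tau$, scale it to be very small, place it inside a ball $B$ far from $U_i$, and splice the two pieces by replacing a short edge of each with two connecting edges. A genericity argument shows that if $B$ avoids the finitely many lines spanned by triples of vertices of $U_i$ and $J$ is in sufficiently general position relative to the edges of $U_i$, then every quadrisecant of the connected sum lies entirely inside $U_i$ or entirely inside the small copy of $J$. In particular the quadrisecants of $U_i$ survive unchanged, so the pathological chord replacements that produced a self-intersection or a new knot type in $\widehat{U_i}$ continue to do so inside $\widehat{U_i \mathbin{\#} J}$.

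The main remaining task is to construct $U_1$ and $U_2$. For $U_1$ I would hunt for a stick unknot carrying two quadrisecant lines $\ell,\ell'$ such that the chord of $\ell$ between two cyclically adjacent secant points meets, in its interior, the chord of $\ell'$ between two cyclically adjacent secant points. Starting from a low-edge-number unknot, I would enumerate candidate quadrisecants by searching four-tuples of edges that admit a common transversal, then perturb two or three vertices to realize two quadrisecants whose replacement chords cross, while verifying that no additional quadrisecant appears and the polygon remains unknotted. For $U_2$, rather than making chords cross each other, I would arrange the replacement chords to leave the polygon embedded but to push some strand to the opposite side of a nearby strand, producing an essential new crossing in some projection; the resulting knot type would then be verified by projecting $\widehat{U_2}$ to a suitable plane and reducing the diagram.

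The hardest step will be the explicit construction itself: one must simultaneously (i) keep the base polygon an unknot, (ii) control the entire set of quadrisecants, and (iii) arrange the local geometry so that chord replacement produces exactly the desired pathology. I expect the cleanest tactic to be placing most of the vertices of $U_i$ on two or three skew (or nearly skew) lines, so that quadrisecants can be classified by the combinatorics of which edges lie on which line; this trades delicate geometric estimates for a finite combinatorial search, and should make both the intended pathology and the absence of spurious quadrisecants checkable by hand.
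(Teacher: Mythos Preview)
Your overall architecture---build unknot examples first, then propagate to every knot type by a connected sum---matches the paper exactly. Two points deserve comment, one a simplification and one a genuine gap.

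For the self-intersecting example you look for \emph{two} quadrisecants whose replacement chords cross. The paper's $K_6$ is simpler: it is a hexagonal unknot with a \emph{single} quadrisecant $\ell$, arranged so that all four secant points lie on one line (four of the original collinear vertices $V_1,V_2,V_4,V_5$ survive as secant points after the edges are extended). Every replacement chord then sits on $\ell$, so $\widehat{K_6}$ degenerates to a segment traversed back and forth. One quadrisecant suffices; you need not synchronize two.

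The real gap is in your connected sum step. The assertion that, for $J$ small and generically placed, ``every quadrisecant of the connected sum lies entirely inside $U_i$ or entirely inside the small copy of $J$'' is not correct and the paper does not claim it. Trisecant lines of $U_i$ sweep out $2$-dimensional ruled surfaces, so a small ball $B$ cannot avoid them; a trisecant of $U_i$ passing through $B$ can pick up a fourth point on $J$, and a line through two nearby points of $J$ can have essentially any direction and hit two edges of $U_i$. The paper instead flattens the summand $K$ (Lemma~\ref{lem:smallK}(d): any line meeting three of its edges is nearly parallel to the $xy$-plane), attaches it at a specific vertex lying on one side of a plane $\Sigma$, and then proves only the much weaker statement that no quadrisecant of the sum meets the interior of a particular short arc $\Lambda$ of $K_6$ (resp.\ $K_{14}$). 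That is enough to guarantee the critical chord $U_1U_2$ appears in the approximation, which is all that is needed; new ``mixed'' quadrisecants elsewhere are allowed and are not controlled. Your plan would need to be reworked along these lines.
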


The conjecture were verified for some knots with crossing number not bigger than five in \cite{Jin:2005} and for hexagonal trefoil knots in \cite{Jin:2011be}.

In the present paper, we work on polygonal knots. For a polygonal knot, let $e(K)$ be the number of edges in $K$. For a knot type $\mathcal{K}$, the edge number $e(\mathcal{K})$ is the smallest number of edges among all polygonal knots with type $\mathcal{K}$. We show the following:

\begin{theorem}\label{thm:main}
For any knot type $\mathcal{K}$, there exist polygonal knots $K_\ast$ and $K_\diamond$ of type $\mathcal{K}$ such that the quadrisecant approximation $\widehat{K_\ast}$ has self-intersections while $\widehat{K_\diamond}$ is a knot with knot type different from $\mathcal{K}$.

Furthermore, we can require that $e(K_\ast)\leq e(\mathcal{K})+6$ and $e(K_\diamond)\leq \frac52e(\mathcal{K})+17$. If $\mathcal{K}$ does not contain the trefoil knot as a connected summand,  we can require that $e(K_\diamond)\leq e(\mathcal{K})+14$.
\end{theorem}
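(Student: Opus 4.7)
The plan is to produce, for each kind of failure, a small local configuration---a \emph{gadget}---that can be spliced into an arbitrary polygonal representative of $\mathcal{K}$. First I would construct two polygonal arcs: a gadget $G_\ast$ that generates a quadrisecant whose chord replacement forces a self-intersection, and a gadget $G_\diamond$ whose chord replacement changes the knot type. Then I would show that placing such a gadget in a small ball $B$ along one edge of a near-minimal polygonal representative $K_0$ of $\mathcal{K}$ yields the desired $K_\ast$ or $K_\diamond$, the point being to arrange $B$ so that no line meeting $B$ can also meet the rest of $K_0$ badly.

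For $G_\ast$ I would design a short polygonal zig-zag contained in a thin slab, with four nearly-collinear vertices arranged so that the chord replacing the outermost subarc passes through the interior zig-zag. A ``pinched W'' with five or six extra vertices should work: the line through the bottoms of the W serves as the unique local quadrisecant, and the chord of $\widehat{G_\ast}$ meets the middle of the W. For $G_\diamond$ the gadget should carry a single quadrisecant whose chord replacement modifies the local braid word. The simplest model I would try is a small clasped configuration for which chord replacement unclasps two strands; this creates or destroys a small summand such as a trefoil, so composing with a matching connected summand in $\mathcal{K}$ changes the overall knot type.

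Given an arbitrary $\mathcal{K}$, take a polygonal $K_0$ with $e(K_0)=e(\mathcal{K})$. After an arbitrarily small generic perturbation, $K_0$ has only finitely many quadrisecants and no degenerate collinearities. Replace a short subsegment of one edge of $K_0$ by a scaled copy of the gadget, scaled so small that every line through three or more of the gadget's vertices lies inside a thin slab disjoint from $K_0\setminus B$. Then the quadrisecants of the combined knot should split cleanly: those avoiding $B$ are inherited from $K_0$ and leave the gadget region alone, while those meeting $B$ are local to the gadget. After one further generic perturbation, no hybrid quadrisecant survives, so the failure manufactured by the gadget is not healed by the surrounding knot.

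The main obstacle is this globalization step: one must ensure that inserting the gadget does not create an unintended quadrisecant that repairs the self-intersection of $\widehat{K_\ast}$ or restores the knot type in $\widehat{K_\diamond}$. I expect this to be a quantitative collinearity-avoidance argument, handled by shrinking the gadget and slightly tilting the host edge. The edge counts should then follow by tallying: about six extra edges for $G_\ast$ and about fourteen for $G_\diamond$. The factor $\tfrac{5}{2}$ in the trefoil-summand case likely reflects that a chord replacement which must interact with an existing trefoil summand has to align with that summand's own three crossings; the construction there presumably requires doubling up a portion of the polygon so the replacement chord can meet all three trefoil crossings at once, giving the $\tfrac{5}{2}\,e(\mathcal{K})+17$ bound.
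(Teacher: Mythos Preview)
Your outline has the right two-step shape (explicit gadget, then globalization), but two of the load-bearing steps are not right.

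First, you splice the gadget into $K_0$, shrinking the \emph{gadget}. The paper does the opposite: it keeps the gadget $K_6$ or $K_{14}$ at full size and shrinks a copy of $K$ (of type $\mathcal{K}$) into a tiny, nearly planar blob near one vertex of the gadget (Lemma~4.2). This direction is what makes the hybrid-quadrisecant argument go through: by flattening the inserted $K$ against the $xy$-plane, any line meeting three of its edges is forced to be almost horizontal, and a separating plane $\Sigma$ then prevents such a line from reaching the critical arc $\Lambda$ of the gadget. Your claim that ``after one further generic perturbation, no hybrid quadrisecant survives'' is not true in general; generic perturbation makes the set of quadrisecants finite, it does not kill hybrids. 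You would need a geometric mechanism like the paper's flattening-plus-separating-plane to actually exclude them.

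Second, and more seriously for $K_\diamond$: even if your local gadget manufactures or destroys a trefoil summand, you still have to know what the quadrisecant approximation does to the \emph{rest} of $K_0$. You implicitly assume that part retains the knot type $\mathcal{K}$, but that is exactly the conjecture being disproved. The paper confronts this directly. In the trefoil-free case it sidesteps the issue: Proposition~4.4 only shows $\widehat{K_\diamond}$ has a trefoil summand, which already forces a type change when $\mathcal{K}$ has none. For arbitrary $\mathcal{K}$ the paper first preprocesses $K$ into a knot $K^1$ by inserting a small $K_6$-style bump on every other edge, so that $\widehat{K^1}$ is forced to be close to $K$ and hence still of type $\mathcal{K}$; this costs roughly $\tfrac{5}{2}$ edges per original edge. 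That preprocessing, not any alignment with trefoil crossings, is the source of the $\tfrac{5}{2}e(\mathcal{K})+17$ bound. Your explanation of the $\tfrac{5}{2}$ is off, and without some analogue of this step your $K_\diamond$ argument does not close.
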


Hence Conjecture \ref{conj:ori} does not hold in general. However, in view that our counterexamples contain redundant edges, and with the results in \cite{Jin:2011be}, the following weaker conjecture may still hold

\begin{conjecture}\label{conj:re}
For a polygonal knot $K$ of type $\mathcal{K}$ with $e(\mathcal{K})$ edges and finitely many quadrisecants, the quadrisecant approximation $\widehat{K}$ has knot type $\mathcal{K}$.
\end{conjecture}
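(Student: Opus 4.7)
The plan is to exploit the rigidity of edge-minimal polygonal representatives. For a polygonal knot $K$ with finitely many quadrisecants, the definition of $\widehat{K}$ amounts to performing, on each maximal subarc between adjacent points of $W$, a straight-line replacement. To show $\widehat{K}$ is isotopic to $K$, it suffices to show that each such replacement $a \rightsquigarrow \overline{pq}$ can be realized by an ambient isotopy supported in a neighborhood of the polygonal disk $D_a$ spanned by $a$ and the chord $\overline{pq}$, and that these disks are pairwise disjoint and each disjoint from $K \setminus a$. Thus the conjecture reduces to a geometric disjointness statement about the family of replacement disks $\{D_a\}_a$ attached to $K$.

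My first step would be to establish the base cases already in the literature: the verifications in \cite{Jin:2005} for knots of crossing number at most five and in \cite{Jin:2011be} for the hexagonal trefoil (which is edge-minimal, since $e(\text{trefoil})=6$) handle the simplest $\mathcal{K}$. Next I would prove a \emph{rigidity lemma}: in any polygonal knot $K$ attaining $e(\mathcal{K})$ edges, no edge of $K \setminus a$ can cross a replacement disk $D_a$. The argument would proceed by contradiction. If some edge $e$ pierces $D_a$, then the collinearity of the four points on the quadrisecant line forces $a$ to consist of either $2$ or $3$ edges of $K$, and $D_a$ to be a triangle or a quadrilateral meeting $e$ transversely. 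In this situation one can describe a local replacement of $a$ and the portion of $e$ near the crossing by a shorter polygonal path, producing a new polygonal knot of the same type with strictly fewer edges, contradicting edge-minimality. The counterexamples of Theorem~\ref{thm:main} should be used as a guide here: they show precisely which extra edges create room for such piercings, and the rigidity lemma should rule out exactly those configurations.

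After establishing disjointness of each $D_a$ from the rest of $K$, the second obstruction is mutual intersection of the disks $D_a, D_{a'}$, which is what causes $\widehat{K_\ast}$ to have self-intersections. Two such disks either belong to different quadrisecants (two distinct lines in space) or to the same quadrisecant (in which case their chords lie on a common line and the disks only share boundary endpoints). I would handle the second case by a direct planar argument on the line. For the first case, I would again invoke edge-minimality: a crossing of two replacement disks $D_a, D_{a'}$ coming from different quadrisecant lines implies the existence of a bigon-like reduction, and one could shortcut across the intersection to reduce the edge count. Combining both disjointness statements, each chord replacement extends to an ambient isotopy, the isotopies can be concatenated, and $\widehat{K}$ is isotopic to $K$. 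The assertion that $K$ and $\widehat{K}$ have the same quadrisecant set would then follow from the fact that the isotopy moves no point off the quadrisecant lines.

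The hardest step will be the rigidity lemma. Unlike the existence results on edge-minimal representatives (where one only needs a single good polygonal knot), here we must constrain \emph{every} edge-minimal representative simultaneously, and we have essentially no classification of such representatives beyond very low crossing number. A secondary difficulty is that ``edge-reducing moves'' of the type needed (replacing two or three consecutive edges pierced by an external edge) may themselves change the knot type if carried out naively, so the argument must produce a reduction that is ambient-isotopic to $K$; this likely requires an \emph{a priori} bound controlling how the piercing edge sits relative to the disk $D_a$, perhaps by analyzing the signs of the crossings in a generic projection chosen so that the quadrisecant line becomes horizontal. If this controlled reduction can be carried out, the conjecture follows; if not, the conjecture may need further refinement, for instance by restricting to knots not containing a trefoil summand, in line with the sharper bound in Theorem~\ref{thm:main}.
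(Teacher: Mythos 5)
The statement you are trying to prove is not proved in the paper at all: Conjecture~\ref{conj:re} is posed there as an \emph{open} question, the weaker replacement the authors offer after their counterexamples demolish Conjecture~\ref{conj:ori}. So there is no proof in the paper to compare yours against, and your text must stand on its own. It does not: it is a research plan whose central step, the ``rigidity lemma,'' is both unproven and, as sketched, based on a false premise. You assert that if an edge $e$ pierces a replacement disk $D_a$, then ``the collinearity of the four points on the quadrisecant line forces $a$ to consist of either $2$ or $3$ edges of $K$.'' Nothing forces this. The arc $a$ runs between two points of $W$ that are adjacent \emph{along the knot}, and there is no bound on how many edges of $K$ lie between two consecutive secant points; collinearity of the four points on one quadrisecant constrains the positions of those four points, not the combinatorics of the arcs between them. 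Relatedly, the object $D_a$ is not well defined: an arbitrary polygonal arc together with the chord joining its endpoints need not span an embedded disk in any canonical way, so the entire reduction of the conjecture to ``disjointness of the disks $D_a$'' is not yet a reduction to a meaningful statement.

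The second load-bearing step --- that any piercing or any intersection of two disks yields ``a new polygonal knot of the same type with strictly fewer edges'' --- is asserted without any construction. Shortcutting across a pierced disk or a bigon is exactly the kind of move that changes knot type (that is the whole mechanism behind the paper's counterexamples $K_\ast$ and $K_{14}$), and you concede this yourself in the final paragraph. Edge-minimality is a global property of the knot type; to contradict it you must exhibit a genuinely isotopic representative with fewer edges, and no such representative is produced. Finally, your closing claim that the quadrisecant sets of $K$ and $\widehat{K}$ agree ``because the isotopy moves no point off the quadrisecant lines'' is not coherent: the isotopy moves every interior point of each arc $a$ onto the chord, and new quadrisecants of $\widehat{K}$ can appear that $K$ did not have. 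In short, you have correctly identified where the difficulty lies, but you have not closed any of the gaps; the conjecture remains open.
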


Given a knot type $\mathcal{K}$, let $R_\ast(\mathcal{K})$ be the minimal number $e(K)-e(\mathcal{K})$ for polygonal knots $K$ with type $\mathcal{K}$ such that $\widehat{K}$ has self-intersections, and let $R_\diamond(\mathcal{K})$ be the minimal number $e(K)-e(\mathcal{K})$ for polygonal knots $K$ with type $\mathcal{K}$ such that $\widehat{K}$ is a knot with type different from $\mathcal{K}$. Theorem \ref{thm:main} shows that $R_\ast(\mathcal{K})\leq 6$ and $R_\diamond(\mathcal{K})\leq \frac32e(\mathcal{K})+17$ for general knots and $R_\diamond(\mathcal{K})\leq14$ for $\mathcal{K}$ which does not contain the trefoil knot as a connected summand. Hence the following question is natural:

\begin{question}
For a given knot type $\mathcal{K}$, what are $R_\ast(\mathcal{K})$ and $R_\diamond(\mathcal{K})$? In particular, are $R_\ast(\mathcal{K})$ and $R_\diamond(\mathcal{K})$ always positive?
\end{question}

The second question is equivalent to Conjecture \ref{conj:re}. On the other hand, in our examples, the knot type $\widehat{K_\diamond}$ is the connected sum of  $K_\diamond$ with the trefoil knots. Hence we have the following question:

\begin{question}
Given a knot type $\mathcal{K}$, what knot types can be given by $\widehat{K}$ for knots $K$ with type $\mathcal{K}$? Is the knot type of $\widehat{K}$ always a connected sum for which $\mathcal{K}$ is a summand? Can $\widehat{K}$ be in some sense ``simpler" than $K$? In particular, can the quadrisecant approximation $\widehat{K}$ be the unknot for a knot $K$ with nontrivial knot type?
\end{question}

We remark that the examples in Theorem \ref{thm:main} can be modified to give corresponding examples in the smooth category, for example, by smoothing the corners.

The paper is organized as follows. In Section \ref{sec:algorithm} we will explain how to find all quadrisecants of a given knot. In Section \ref{sec:unknot} we will give two trivial knots $K_6$ and $K_{14}$ with $6$ and $14$ edges, such that $\widehat{K_6}$ has self-intersections while $\widehat{K_{14}}$ is a trefoil knot. In Section \ref{sec:general} we will use a carefully defined connected sum operation to generalize the examples to show Theorem \ref{thm:main}.

\section{How to find quadrisecants}\label{sec:algorithm}

In this section, we give an algorithm to find all quadrisecants of a given polygonal knot. The results will be used in the construction of counterexamples in the following sections.

Let $K=V_1V_2\cdots V_n$ be a polygonal knot in $\mathbb{R}^3$. The case when $n=3$ is trivial, hence we always assume $n>3$. Let $V_{n+1}=V_1$, and for $1\leq i\leq n$, let $v_i=V_{i+1}-V_i$. We require that $K$ is in general position, namely $K$ satisfies following
\begin{enumerate}
\item[(a)] no four vertices of $K$ are coplanar;
\item[(b)] the vectors for any three edges of $K$ are linear independent.
\end{enumerate}

Given three vectors $u, v, w$, let $Det(u,v,w)$ be their determinant. Then the above two conditions are equivalent to the following:
\begin{enumerate}
\item[$(a')$] for $1\leq i<j<k<l\leq n$, $$Det(V_j,V_k,V_l)-Det(V_i,V_k,V_l)+Det(V_i,V_j,V_l)-Det(V_i,V_j,V_k)\neq0;$$
\item[$(b')$] for $1\leq i<j<k\leq n$, $Det(v_i,v_j,v_k)\neq0$.
\end{enumerate}

If $L$ is a quadrisecant of $K$, then there are four edges $V_iV_{i+1}$, $V_jV_{j+1}$, $V_kV_{k+1}$, $V_lV_{l+1}$, $1\leq i<j<k<l\leq n$, such that for $h\in \{i,j,k,l\}$, we have
$$L\cap V_hV_{h+1}\in V_hV_{h+1}-\{V_{h+1}\},$$
namely the intersection point is in the interior of the edge or is the initial point of the edge. Then the intersection points can be presented as
\begin{align}
V_i+pv_i,V_j+qv_j,V_k+rv_k,V_l+sv_l, 0\leq p,q,r,s<1.
\end{align}
Since the four points are collinear, there exist $x,y\in \mathbb{R}-\{0,1\}$ such that
\begin{align}
&(1-x)(V_i+pv_i)+x(V_j+qv_j)-(V_k+rv_k)=0,\\
&(1-y)(V_i+pv_i)+y(V_j+qv_j)-(V_l+sv_l)=0.
\end{align}
By $(b')$ and the Cramer's Rule, we have
\begin{align}
p=&\frac{Det(V_k-V_j,v_j,v_k)}{Det(v_i,v_j,v_k)}\frac{1}{1-x}+\frac{Det(V_j-V_i,v_j,v_k)}{Det(v_i,v_j,v_k)},\label{eq:x1}\\
q=&\frac{Det(v_i,V_i-V_k,v_k)}{Det(v_i,v_j,v_k)}\frac{x-1}{x}+\frac{Det(v_i,V_k-V_j,v_k)}{Det(v_i,v_j,v_k)},\label{eq:x2}\\
r=&\frac{Det(v_i,v_j,V_j-V_i)}{Det(v_i,v_j,v_k)}\frac{x}{1}+\frac{Det(v_i,v_j,V_i-V_k)}{Det(v_i,v_j,v_k)},\label{eq:x3}
\end{align}
\begin{align}
p=&\frac{Det(V_l-V_j,v_j,v_l)}{Det(v_i,v_j,v_l)}\frac{1}{1-y}+\frac{Det(V_j-V_i,v_j,v_l)}{Det(v_i,v_j,v_l)},\label{eq:y1}\\
q=&\frac{Det(v_i,V_i-V_l,v_l)}{Det(v_i,v_j,v_l)}\frac{y-1}{y}+\frac{Det(v_i,V_l-V_j,v_l)}{Det(v_i,v_j,v_l)},\label{eq:y2}\\
s=&\frac{Det(v_i,v_j,V_j-V_i)}{Det(v_i,v_j,v_l)}\frac{y}{1}+\frac{Det(v_i,v_j,V_i-V_l)}{Det(v_i,v_j,v_l)}.\label{eq:y3}
\end{align}
Let $f(z)=1/(1-z)$, then $f\circ f(z)=(z-1)/z$ and $f\circ f\circ f(z)=z$. Hence $(\ref{eq:x1})(\ref{eq:x2})(\ref{eq:x3})$ and $(\ref{eq:y1})(\ref{eq:y2})(\ref{eq:y3})$ are symmetric. Since $1\leq i<j<k<l\leq n$, $V_i,V_{i+1},V_k,V_{k+1}$ are four distinct points and $V_j,V_{j+1},V_l,V_{l+1}$ are four distinct points. By $(a')$, we have
\begin{align}
Det(v_i,V_i-V_k,v_k)\neq0, \quad Det(V_l-V_j,v_j,v_l)\neq0.
\end{align}
Hence by $(\ref{eq:x1})(\ref{eq:y1})$ and $(\ref{eq:x2})(\ref{eq:y2})$, $x$ and $y$ are dependent on each other. And we can solve $x,y$, then get $p,q,r,s$. Actually, $x$ satisfies a quadratic equation
\begin{align}
Ax^2+Bx+C=0.\label{eq:ABC}
\end{align}
Here $A$, $B$ and $C$ are some polynomial functions of coordinates of the vertices $V_i$, $V_{i+1}$, $V_j$, $V_{j+1}$, $V_k$, $V_{k+1}$, $V_l$ and $V_{l+1}$. And $A$ or $B$ or $C$ may be zero.

In $(\ref{eq:x1})(\ref{eq:x2})(\ref{eq:x3})$, let $x$ vary in $\mathbb{R}-\{0,1\}$, then we get a ruled surface $S$ which is the union of lines passing through the corresponding $V_i+pv_i$, $V_j+qv_j$ and $V_k+rv_k$. If
\begin{align*}
Det(V_k-V_j,v_j,v_k)=0, \quad Det(v_i,v_j,V_j-V_i)=0,
\end{align*}
then by $(a')$, we have $V_{j+1}=V_k$ and $V_{i+1}=V_j$. Then $p=1$, $r=0$, and $L$ can not be a quadrisecant. If only one of the determinants is zero, then $S$ is a part of a plane, and $V_i+pv_i$ or $V_k+rv_k$ is fixed when $x$ varies. If both determinants are nonzero, then one can check that $S$ is a quadric.

Hence if the equation $(\ref{eq:ABC})$ has infinitely many solutions, then $A=B=C=0$, and the four edges $V_iV_{i+1}$, $V_jV_{j+1}$, $V_kV_{k+1}$, $V_lV_{l+1}$ must be linear dependent or lie on a quadric generated by $V_iV_{i+1}$, $V_jV_{j+1}$ and $V_kV_{k+1}$. This is also the result in \cite{Jin:2005} that, for four edges in general position, there are at most two straight lines intersecting each of them, corresponding to two solutions of $(\ref{eq:ABC})$ with $A\neq 0$.

By the above discussion, for a given polygonal knot $K$, we have an algorithm to find all its quadrisecants, that is, to find all possible $0\leq p,q,r,s<1$ for every four edges of $K$. If $K$ is in general position and no edge of $K$ lies on a quadric generated by other three edges, then $K$ have finitely many quadrisecants. We can then get its quadrisecant approximation. In the following section, we will use this algorithm to determine the quadrisecant approximation of a given polygonal knot. Most computations in the present paper are performed by Mathematica 6.0.

\section{Quadrisecant approximation of the unknot}\label{sec:unknot}

In this section, we will give two polygonal unknots $K_6$ and $K_{14}$ with $6$ and $14$ edges respectively such that $\widehat{K_6}$ has self-intersections, and $\widehat{K_{14}}$ is a trefoil knot.

\subsection{Construction of $K_6$}

The $6$-edge unknot $K_6$ is constructed as follows. Let $K=V_1V_2\cdots V_6$ be the polygonal knot with the following coordinates:
\begin{align*}
&V_1=(0, 0, 0),&&V_2=(1, 0, 0),&&V_3=(2, 0, 1),\\
&V_4=(3, 0, 0),&&V_5=(4, 0, 0),&&V_6=(2, 3, 0).
\end{align*}
$K$ is obtained from the triangle $V_1V_5V_6$  in Figure \ref{fig:triangle}(a) by replacing a line segment in $V_1V_5$ by two edges $V_2V_3$ and $V_3V_4$ as in Figure \ref{fig:triangle}(b). Clear $V_1$, $V_2$, $V_4$, $V_5$ are in a straight line $\ell$. Then we can extend $V_6V_1$, $V_3V_2$, $V_3V_4$, $V_6V_5$ to get $K_6=W_1W_2\cdots W_6$ in Figure \ref{fig:triangle}(c). With suitable choices of extensions, $\ell$ will be the only quadrisecant of $K_6$, and $\widehat{K_6}$ will have self-intersections. The following is such a choice:
\begin{align*}
&W_1=(-\frac{1}{5}, -\frac{3}{10}, 0),&&W_2=(\frac{4}{5}, 0, -\frac{1}{5}),&&W_3=(2, 0, 1),\\
&W_4=(\frac{13}{4}, 0, -\frac{1}{4}),&&W_5=(\frac{17}{4}, -\frac{3}{8}, 0),&&W_6=(2, 3, 0).
\end{align*}

\begin{figure}[h]
\centerline{\scalebox{0.59}{\includegraphics{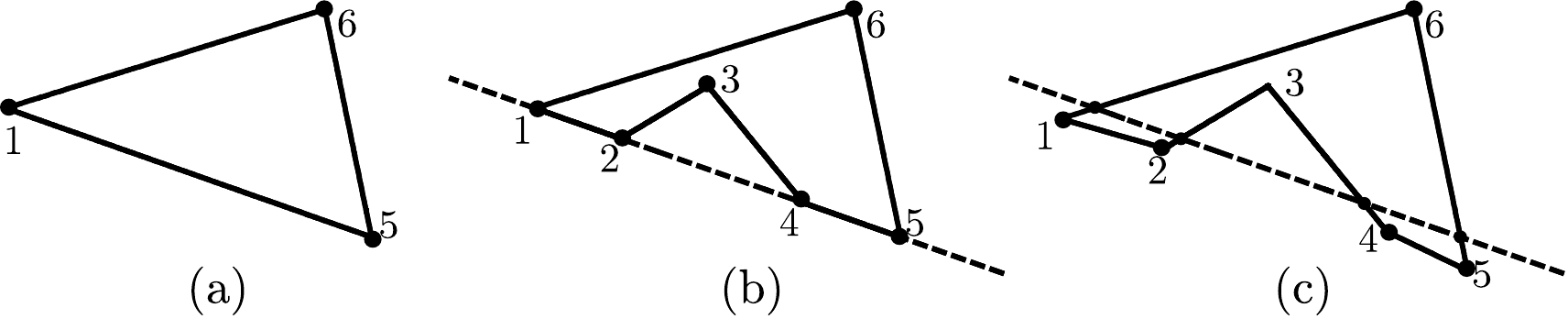}}}
\caption{The hexagonal unknot $K_6$}\label{fig:triangle}
\end{figure}

\subsection{Construction of $K_{14}$}
We first construct a primary knot $K_0$, then perturb it to get the final knot $K_{14}$. Certain lines intersecting the primary knot $K_0$ will become the quadrisecants of $K_{14}$.

The primary knot $K_0=V_1V_2\cdots V_{14}$ is the polygonal knot with the following coordinates, see Figure \ref{fig:knot0}.
\begin{align*}
&V_1=(0, 0, 0),&&V_2=(0, 0, -4),&&V_3=(8, -2, -4),\\
&V_4=(6, -3, -6),&&V_5=(0, 0, -6),&&V_6=(0, 0, -8),\\
&V_7=(10, -1, -8),&&V_8=(10, -1, 1),&&V_9=(6, 1, 0),\\
&V_{10}=(8, 0, -1),&&V_{11}=(6, -1, 0),&&V_{12}=(12, -2, -3),\\
&V_{13}=(12, 0, 0),&&V_{14}=(6, 2, 0).&&
\end{align*}

\begin{figure}[h]
\centerline{\scalebox{0.6}{\includegraphics{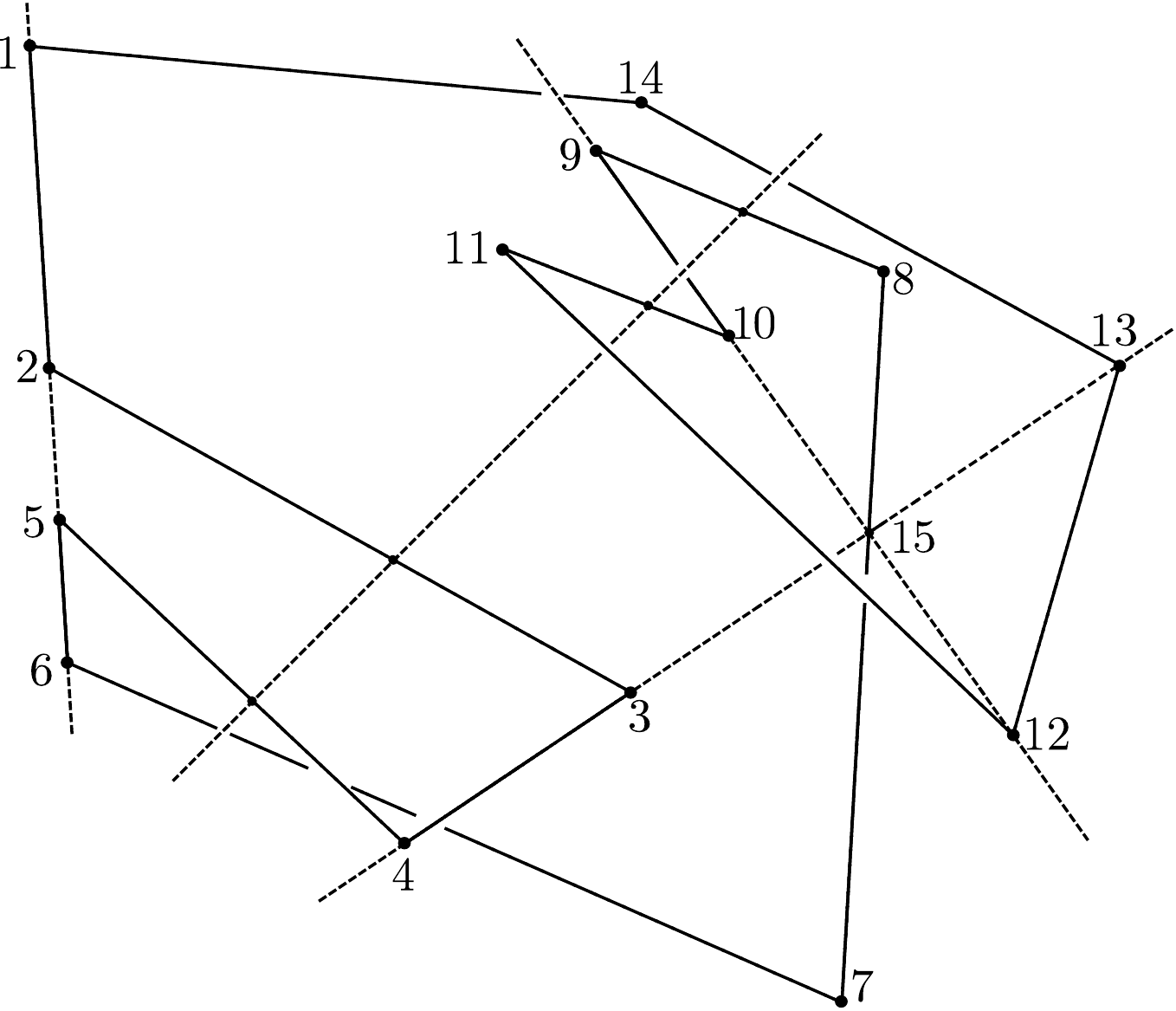}}}
\caption{The primary knot $K_0$}\label{fig:knot0}
\end{figure}

Let $V_{15}=(10,-1,-2)$. The knot $K_0$ is obtained as follows:
\begin{enumerate}
\item[(i)] choose $V_1$, $V_{13}$, $V_{14}$ and $V_9$ in the $xy$-plane, and $V_2$, $V_5$, $V_6$ in the $z$-axis;
\item[(ii)] choose $V_4$ having the same $z$-coordinate as $V_5$, and $V_3$, $V_{15}$ in $V_4V_{13}$;

\item[(iii)] choose $V_7V_8$ containing $V_{15}$ and parallel to the $z$-axis;

\item[(iv)] choose $V_{10}$, $V_{12}$ in the line passing through $V_9$ and $V_{15}$. Then choose $V_{11}$.
\end{enumerate}

The following $(a)$ and $(b)$ can be verified:

$(a)$ There are three straight lines $L_1$, $L_2$ and $L_3$ passing the four vertices in the three sets $\{V_1, V_2, V_5, V_6\}$, $\{V_3, V_4, V_{13}, V_{15}\}$ and $\{V_9, V_{10}, V_{12}, V_{15}\}$ separately;

$(b)$ There is a quadrisecant $L_4$ of $K_0$ intersecting $V_2V_3$, $V_4V_5$, $V_8V_9$ and $V_{10}V_{11}$.

In Figure \ref{fig:knot0}, $L_1$, $L_2$, $L_3$, $L_4$ are given by the dashed lines. Let $\Lambda$ be the broken line $V_1V_{14}V_{13}$. We hope that after small perturbation of the vertices, $L_1$, $L_2$, $L_3$ can become quadrisecants and no quadrisecant other than $L_1$ and $L_2$ can intersect $\Lambda$. By the discussion in Section \ref{sec:algorithm}, there will be a quadrisecant $L_4'$ intersecting $V_2V_3$, $V_4V_5$, $V_8V_9$ and $V_{10}V_{11}$. $L_4'$ can be thought as obtained from $L_4$ by a slightly movement. Then the quadrisecant approximation will be a trefoil knot.

\vspace{5pt}

The final knot $K_{14}=W_1W_2\cdots W_{14}$ is given by the following coordinates, see Figure \ref{fig:knot1}.
\begin{align*}
&W_1=(-\frac{3}{5},-\frac{1}{5},0),&&W_2=(-\frac{19}{25},\frac{11}{50},-\frac{99}{25}),&&W_3=(\frac{228}{25},-\frac{66}{25},-\frac{112}{25}),\\
&W_4=(\frac{143}{20},-\frac{121}{40},-\frac{109}{20}),&&W_5=(-\frac{13}{10},\frac{11}{20},-\frac{61}{10}),&&W_6=(-1,\frac{1}{10},-\frac{171}{20}),\\
&W_7=(10,-1,-8),&&W_8=(10,-1,1),&&W_9=(\frac{28}{5},\frac{6}{5},-\frac{1}{10}),\\
&W_{10}=(\frac{81}{10},\frac{1}{20},-\frac{21}{20}),&&W_{11}=(\frac{59}{10},-\frac{21}{20},\frac{1}{20}),&&W_{12}=(12,-\frac{11}{5},-\frac{33}{10}),\\
&W_{13}=(12,\frac{1}{5},\frac{3}{10}),&&W_{14}=(6, 2, 0).&&
\end{align*}

\begin{figure}[h]
\centerline{\scalebox{0.6}{\includegraphics{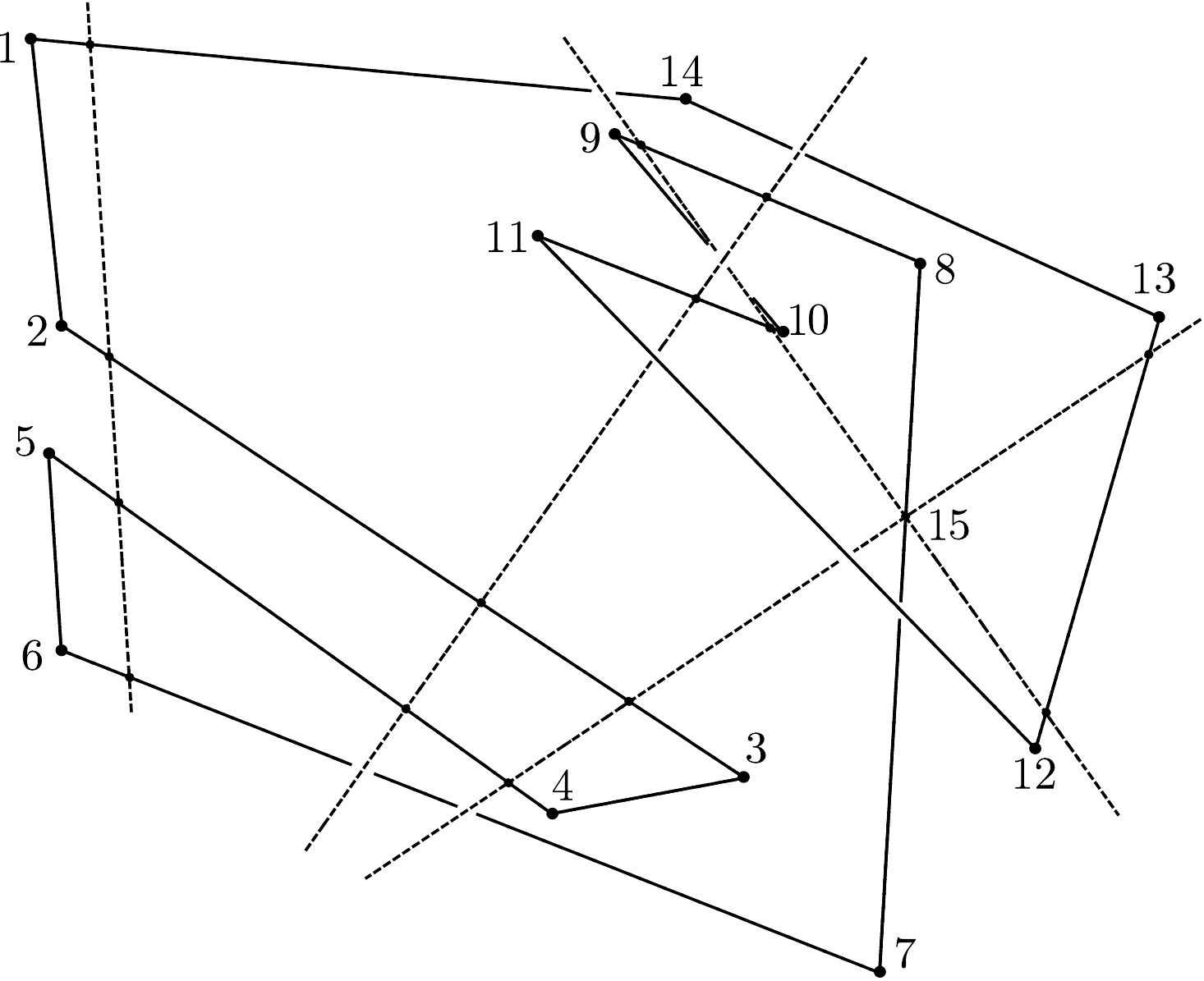}}}
\caption{The final knot $K_{14}$}\label{fig:knot1}
\end{figure}

$K_{14}$ is obtained from $K_0$ by firstly shrinking the edge $V_3V_4$ and slightly moving $V_6$ along $z$-axis, then extending edges $V_7V_6$, $V_4V_5$, $V_2V_3$, $V_{14}V_1$, $V_8V_9$, $V_{10}V_{11}$ and $V_{12}V_{13}$, according to the following formulas.
\begin{align*}
&U_3=V_3-(V_3-V_4)\delta_3, &&U_4=V_4+(V_3-V_4)\delta_4,\\
&U_6=V_6-(0,0,\delta_6), &&W_6=U_6+(U_6-V_7)\epsilon_6,\\
&W_4=U_4+(U_4-V_5)\epsilon_4, &&W_5=V_5-(U_4-V_5)\epsilon_5,\\
&W_2=V_2+(V_2-U_3)\epsilon_2, &&W_3=U_3-(V_2-U_3)\epsilon_3,\\
&W_1=V_1+(V_1-V_{14})\epsilon_1, &&W_9=V_9+(V_9-V_8)\epsilon_9,\\
&W_{10}=V_{10}+(V_{10}-V_{11})\epsilon_{10}, &&W_{11}=V_{11}-(V_{10}-V_{11})\epsilon_{11},\\
&W_{12}=V_{12}+(V_{12}-V_{13})\epsilon_{12}, &&W_{13}=V_{13}-(V_{12}-V_{13})\epsilon_{13}.
\end{align*}
Here the $\delta$'s and $\epsilon$'s are small positive numbers. The knot $K_{14}$ is obtained by the following choices:
\begin{align*}
&\delta_3=\epsilon_3=\epsilon_5=1/5, \quad\delta_4=1/4, \quad\delta_6=1/2,\\
&\epsilon_1=\epsilon_2=\epsilon_4=\epsilon_6=\epsilon_9=\epsilon_{12}=\epsilon_{13}=1/10,\\ &\epsilon_{10}=\epsilon_{11}=1/20.
\end{align*}
By computation using Mathematica, we found that $K_{14}$ satisfies the general position conditions $(a')$ and $(b')$ in Section \ref{sec:algorithm}. By the discussion in Section \ref{sec:algorithm} and Mathematica, it is not hard to see that $K_{14}$ has only $4$ quadrisecants, which are shown by the dashed lines in Figure \ref{fig:knot1}.

$K_{14}$ is a polygonal unknot. The quadrisecants and quadrisecant approximation of $K_{14}$ are given in Figure \ref{fig:knot2}. Its quadrisecant approximation $\widehat{K_{14}}$ is a trefoil knot.

\begin{figure}[h]
\centerline{\scalebox{0.6}{\includegraphics{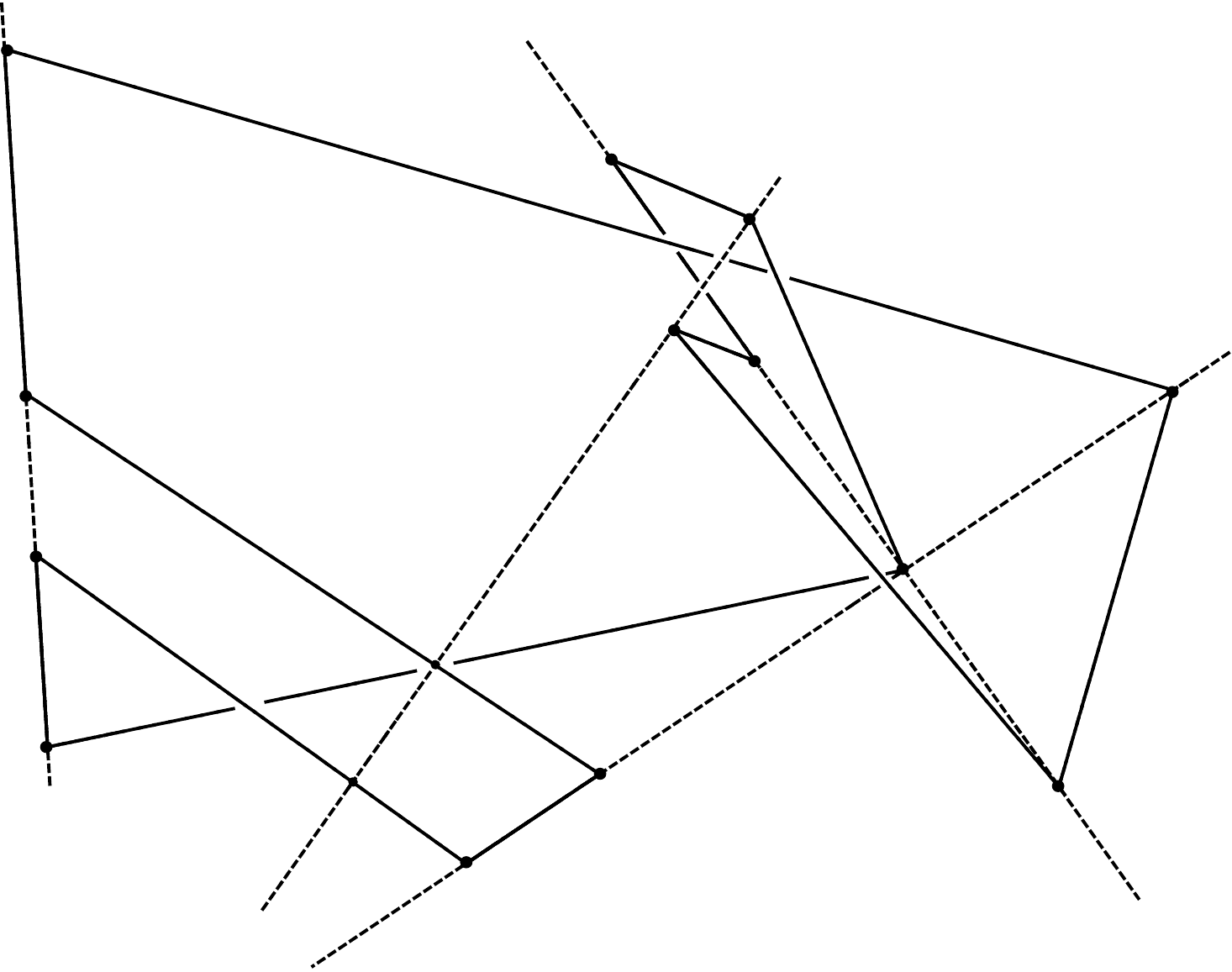}}}
\caption{The quadrisecant approximation $\widehat{K_{14}}$}\label{fig:knot2}
\end{figure}

\begin{remark}
In the above example, $K_{14}$ has two quadrisecants intersecting each other, which is not the generic case. We can perform slight perturbation to get disjoint quadrisecants while keeping all required results. For example, we can change $V_{10}$ and $V_{12}$ to $(8, 0, -1.1)$ and $(12,-2,-3.3)$ to achieve this, and the two corresponding intersection points on the edge $V_7V_8$ are $V_{15}=(10,-1,-2)$ and $V_{15}^\prime=(10,-1,-2.2)$. We can also perturb $V_3$ to get disjoint quadrisecants.
\end{remark}

\section{Quadrisecant approximation of connected sums}\label{sec:general}

In this section, we define a connected sum operation, and we use it to give counterexamples for general knot types. The counterexample will be a connected sum with one summand having a given knot type and satisfying certain conditions, and the other being the knot $K_6$ or $K_{14}$ in Section \ref{sec:unknot}.

\subsection{The connected sum operation}

Let $K=V_1V_2\cdots V_n$ be a polygonal knot in $\mathbb{R}^3$. Given a plane $\Pi$ in $\mathbb{R}^3$, let $K_\Pi$ be the image of $K$ under the perpendicular projection from $\mathbb{R}^3$ onto $\Pi$. We can regard directed projections as points on the unit sphere $S^2\subset\mathbb{R}^3$ with the induced topology.

\begin{lemma}[{\cite[Proposition 1.12]{Burde1985}}]
For a given polygonal knot $K$, the set of projections whose image has only transverse double self-intersections is open and dense in $S^2$.
\end{lemma}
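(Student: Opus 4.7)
The plan is to characterize the complement (the ``bad'' directions) as a finite union of proper algebraic/semi-algebraic subsets of $S^2$, each of positive codimension, and conclude by taking the complement.

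First I would list exactly what can go wrong for a projection $\pi_{\vec d}\co\mathbb{R}^3\to\Pi$ along $\vec d\in S^2$, where $\Pi=\vec d^{\perp}$. A self-intersection of $\pi_{\vec d}(K)$ that is not a transverse double point of interiors of two edges must fall into one of the following finitely many degeneracies, indexed by tuples of edges/vertices of $K=V_1V_2\cdots V_n$:
\begin{enumerate}
\item[(D1)] a vertex $V_i$ projects into the projected line of a non-adjacent edge $V_jV_{j+1}$;
\item[(D2)] two non-adjacent edges project to overlap on a segment (non-transverse double intersection), i.e.\ both edges lie in a common plane containing $\vec d$;
\item[(D3)] three pairwise non-adjacent edges project to have a common image point (a triple point).
\end{enumerate}
Adjacent edges share a vertex and always meet, so they are excluded from the count, and one should check separately that adjacent edges never create extra intersections besides their common endpoint unless their directions coincide (which is the non-general-position case for a polygonal knot; a mild preliminary argument handles collinear consecutive edges).

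Next I would estimate the locus in $S^2$ created by each degeneracy. For (D1), fixing $V_i$ and the edge $V_jV_{j+1}$, the condition is that $\vec d$ lies in the two-dimensional subspace spanned by $V_i-V_j$ and $v_j:=V_{j+1}-V_j$; intersecting with $S^2$ gives a great circle (or a point if the three are collinear, a case ruled out in general position). For (D2), $\vec d$ must lie in the affine hull-direction of the two edges, again a great circle. For (D3), a common triple point requires a line in direction $\vec d$ meeting all three edges; as discussed in Section~\ref{sec:algorithm}, for three edges in general position the family of transversals traces out (a piece of) a quadric ruled surface, and the directions of its rulings form a $1$-dimensional algebraic curve on $S^2$. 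In every case the bad locus from a single tuple is a proper closed subset of $S^2$ of real dimension at most $1$.

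Since $K$ has only finitely many vertices and edges, the total bad set
\[
B \;=\; \bigcup_{\text{tuples of type (D1),(D2),(D3)}} (\text{bad locus})
\]
is a \emph{finite} union of great circles and real algebraic curves on $S^2$. Such a finite union is closed and nowhere dense, so its complement is open and dense. The complement is precisely the set of directions whose projection has only transverse double self-intersections, proving the lemma.

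The main obstacle I foresee is bookkeeping rather than any single hard step: one needs to be careful that the exclusions (non-adjacent edges, avoiding the common endpoint of adjacent edges, excluding pre-existing crossings from counting as ``triple'' when one of the three ``edges'' shares a vertex with another) truly exhaust the non-generic cases, and that in each of (D1)--(D3) the condition on $\vec d$ is non-degenerate, i.e.\ actually cuts out a proper subset of $S^2$ and not all of $S^2$. A clean way to handle this uniformly is to write each condition as the vanishing of a nonzero real-analytic (in fact polynomial) function on $S^2$ defined from the vertex coordinates, and observe that a finite union of zero sets of nonzero real-analytic functions on the connected manifold $S^2$ is closed and nowhere dense.
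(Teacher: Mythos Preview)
The paper does not supply its own proof of this lemma; it is quoted with a citation to Burde--Zieschang and then used as a black box. So there is no in-paper argument to compare against.

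Your outline is essentially the classical argument from the cited reference: each failure of regularity is witnessed by one of finitely many incidence conditions among the vertices and edges of $K$, and each such condition cuts out a great circle (for (D1) and (D2)) or a real-algebraic curve (for (D3)) on $S^2$; the finite union of these is closed and nowhere dense, so its complement is open and dense. Your enumeration (D1)--(D3) is the standard one, and your closing remark---that each condition can be written as the vanishing of a nonzero polynomial on $S^2$---is exactly how one handles the ``proper subset'' verification uniformly without chasing degenerate subcases. One small wording point on (D2): the cleanest formulation is $\vec d\in\mathrm{span}(v_i,v_j)$ (projected edge directions parallel), which is a great circle regardless of whether the two edges are coplanar; your phrasing ``common plane containing $\vec d$'' describes the stronger overlap condition, but since parallel non-coplanar projections cannot intersect anyway, the distinction is harmless. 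The proposal is correct.
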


Suppose that $\Pi$ is such a plane. We choose a vertex of the convex hull of $K_\Pi$, whose preimage must be some vertex, say $V$, of $K$.

\begin{lemma}\label{lem:smallK}
Let $K$, $\Pi$ and $V$ be as above, and $\Sigma\subset\mathbb{R}^3$ be a plane perpendicular to the $xy$-plane. Then for any $P\in \Sigma$ and $\epsilon, \delta>0$, there exists an affine transformation $T$ from $\mathbb{R}^3$ to itself, such that:

$(a)$ $T(\Pi)$ is parallel to the $xy$-plane.

$(b)$ $T(V)=P$ and $T(K)$ is in the $\epsilon$-neighborhood of $P$.

$(c)$ $T(K)\cap \Sigma=P$, namely $T(K)$ lies in one side of $\Sigma$.

$(d)$ If a straight line $L$ intersects more than two edges of $T(K)$, then the angle between $L$ and $T(\Pi)$ is smaller than $\delta$.
\end{lemma}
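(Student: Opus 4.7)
The plan is to obtain $T$ as a composition of five elementary affine maps: a rotation bringing $\Pi$ onto the $xy$-plane, a rotation about the vertical axis through $V$, a translation carrying $V$ to $P$, a uniform contraction centered at $P$, and finally a squeeze that compresses the $z$-coordinate about the horizontal plane through $P$. Conditions (a)--(c) will follow almost tautologically from the way these maps are chosen; the substance of the lemma is arranging (d), which will be forced by making the squeeze severe enough.

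Concretely, let $R_1$ be a rotation of $\mathbb{R}^3$ sending $\Pi$ onto the $xy$-plane, and write $K_1=R_1(K)$ and $V_1=R_1(V)$. Let $\pi\co\mathbb{R}^3\to\mathbb{R}^2$ denote vertical projection, so that $\pi(K_1)$ is a rotate of $K_\Pi$. Since $\pi(V_1)$ is a vertex of the convex hull of $\pi(K_1)$, some line $m$ in the $xy$-plane through $\pi(V_1)$ has $\pi(K_1)$ lying entirely on one side and meeting $m$ only at $\pi(V_1)$. Let $\ell_\Sigma$ be the intersection of $\Sigma$ with the $xy$-plane, and choose the rotation $R_2$ about the vertical axis through $V_1$ that sends $m$ to a line parallel to $\ell_\Sigma$. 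Let $T_1$ be the translation by $P-V_1$ (which equals $P-R_2(V_1)$, since $V_1$ lies on the rotation axis of $R_2$). For parameters $s,t\in(0,1]$ to be chosen later, let $S_s$ be the contraction $Q\mapsto P+s(Q-P)$ and $Z_t$ be the affine map scaling the $z$-coordinate by $t$ about the horizontal plane through $P$. Set $T=Z_t\circ S_s\circ T_1\circ R_2\circ R_1$.

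Conditions (a), (b), (c) are then bookkeeping. $R_1$ makes $T(\Pi)$ horizontal and every subsequent map preserves horizontal planes, giving (a). By construction $T(V)=P$; once $s$ is small enough, $S_s\circ T_1\circ R_2\circ R_1(K)$ lies in the $\epsilon$-neighborhood of $P$, and since $Z_t$ with $t\leq1$ only moves points closer to the horizontal plane through $P$, this survives, giving (b). For (c), $\pi(T(K))$ is the image of $\pi(K_1)$ under a rotation about $\pi(V_1)$, a translation, and a planar contraction about $\pi(P)$, all of which preserve the one-sidedness of $\pi(K_1)$ with respect to $m$; by the choice of $R_2$ and $T_1$ the image of $m$ is precisely $\ell_\Sigma$, so $\pi(T(K))$ meets $\ell_\Sigma$ only at $\pi(P)$. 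The full preimage of $\pi(P)$ in $T(K)$ reduces to $\{P\}$, because the transverse double-intersection hypothesis prevents a convex-hull vertex of $\pi(K_1)$ from arising either as an interior crossing of two edges or as the image of two distinct vertices.

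The real work is (d). The key observation is that no line perpendicular to the $xy$-plane can meet three edges of $K_1$: such a line would project to a single point lying on three projected edges, contradicting the assumption that $\pi(K_1)=K_\Pi$ has only transverse double self-intersections. The set of collinear triples $(p_1,p_2,p_3)\in E_i\times E_j\times E_k$, as $(E_i,E_j,E_k)$ ranges over triples of edges of $K_1$, is compact; combined with the previous observation (and a short check ruling out the degenerations coming from edges that share a vertex), the slope function $|z_1-z_3|/\|\pi(p_1)-\pi(p_3)\|$ extends continuously there and is bounded above by a constant $M=M(K_1)$. Uniform scaling preserves slopes while $Z_t$ multiplies the slope of every line by $t$, so any line meeting three or more edges of $T(K)$ has slope at most $tM$; choosing $t<\tan\delta/M$ forces the angle between such a line and $T(\Pi)$ to be less than $\delta$, giving (d). I expect this compactness-plus-slope estimate to be the main obstacle, primarily because of the care needed to ensure that degenerate collinear triples do not make the slope expression blow up.
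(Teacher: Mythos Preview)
Your proof is correct and follows the same strategy as the paper: compose a rotation taking $\Pi$ to horizontal, a rotation about a vertical axis, a translation to $P$, a uniform contraction, and finally a contraction along the $z$-axis. The paper's proof is terser---it simply asserts that (d) ``can be achieved by a further linear contraction along the $z$-axis''---whereas you supply the compactness argument on collinear triples that actually justifies the uniform slope bound needed for that step.
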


\begin{proof}
$(a)$ can be achieved by rotations. $(b)$ can be achieved by translations and linear contractions. Since the projection image of $V$ in $\Pi$ is a vertex of the convex hull of $K_\Pi$, $(c)$ can be achieved by a further rotation around the line containing $P$ and parallel to the $z$-axis. $(a)$ and $(b)$ will still hold. Then $(d)$ can be achieved by a further linear contraction along the $z$-axis.
\end{proof}

Note that condition $(d)$ means that if the angle between a straight line $L$ and the $xy$-plane is bigger than $\delta$, then for a plane perpendicular to $L$ the projection image of $K$ in it will have only transverse double self-intersections.

Suppose that $T$ is such an affine transformation. Denote $T(K)$ by $K_{\epsilon,\delta}^P$. One should keep in mind that the knot $K_{\epsilon,\delta}^P$ also depends on the plane $\Pi$, the vertex $V$, the plane $\Sigma$ and the affine transformation $T$.

\begin{lemma}\label{lem:PUnion}
Let $K$ and $K'$ be two polygonal knots in $\mathbb{R}^3$. Suppose that $P$ is a point in $K'$, such that edges containing $P$ are not parallel to the $xy$-plane. Then for sufficiently small $\epsilon, \delta>0$, we have $K_{\epsilon,\delta}^P\cap K'=P$.
\end{lemma}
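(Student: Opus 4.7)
The plan is to localize the analysis around $P$ using the smallness of $\epsilon$, and then to extract from condition $(d)$ of Lemma~\ref{lem:smallK} two pieces of information: a bound on how many edges of $T(K)$ a line through $P$ can meet, and a bound on the angle that each individual edge of $T(K)$ makes with the $xy$-plane.

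First, since $K'$ is compact and only the edges incident to $P$ accumulate at $P$, I choose $\epsilon$ so small that $K'\cap N_\epsilon(P)\subset e_1\cup e_2$, where $e_1,e_2$ are the (one or two) edges of $K'$ containing $P$. Let $\ell_i$ be the line supporting $e_i$ and $\alpha_i>0$ the angle it makes with the $xy$-plane (positive by hypothesis). Set $\alpha=\min_i\alpha_i$ and fix $\delta<\alpha$. Because $T(K)\subset N_\epsilon(P)$ by condition $(b)$ of Lemma~\ref{lem:smallK}, it suffices to show $e_i\cap T(K)=\{P\}$ for each $i$.

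The key observation is that, for sufficiently small $\delta$, every edge of $T(K)$ itself has angle less than $\delta$ with the $xy$-plane. Indeed, for any edge $f$ of $T(K)$, the supporting line $L_f$ contains both endpoints of $f$; each endpoint lies on a neighboring edge of $T(K)$, which by the general position condition $(b')$ (preserved under the affine map $T$) has direction not parallel to $f$. Thus $L_f$ meets three distinct edges of $T(K)$, and $(d)$ forces the angle of $L_f$ with $T(\Pi)$, i.e.\ with the $xy$-plane, to be strictly less than $\delta$.

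With this in hand, fix $e_i$. Its line $\ell_i$ has angle $\alpha_i\geq\alpha>\delta$, so $(d)$ applied to $\ell_i$ implies it meets at most two edges of $T(K)$. The vertex $P=T(V)$ of $T(K)$ lies on exactly two edges $f_1,f_2$ of $T(K)$, and $P\in\ell_i\cap f_1\cap f_2$, so these exhaust the allowed intersections. By the key observation each $f_j$ has angle less than $\delta<\alpha_i$, so the line through $f_j$ is distinct from $\ell_i$; two distinct lines through the common point $P$ meet only at $P$, giving $\ell_i\cap f_j=\{P\}$ and hence $\ell_i\cap T(K)=\{P\}$. Therefore $e_i\cap T(K)=\{P\}$, and combining over $i$ yields $K_{\epsilon,\delta}^P\cap K'=\{P\}$. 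The principal obstacle is that $(d)$ literally restricts only those lines that cross three or more edges and says nothing a priori about the slope of any single edge; the three-edge trick used in the key observation is the crucial step that converts $(d)$ into the edge-by-edge angle bound needed to separate $\ell_i$ from $f_1,f_2$.
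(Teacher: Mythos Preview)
Your proof is correct and follows essentially the same approach as the paper's: localize with $\epsilon$ to the edges of $K'$ through $P$, then invoke condition $(d)$ with $\delta$ below their angle to the $xy$-plane to see that the supporting lines of those edges can meet only the two edges of $T(K)$ incident to $P$, and only at $P$. Your explicit ``key observation'' (via the three-edge trick) that every edge of $T(K)$ is itself near-horizontal makes rigorous a point the paper's terser argument leaves implicit in its appeal to $(a)$ and $(d)$.
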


\begin{proof}
Let $E$ be the union of edges of $K'$ containing $P$. It will contain one or two edges. By $(b)$ in Lemma \ref{lem:smallK}, if $\epsilon$ is sufficiently small, then $K_{\epsilon,\delta}^P\cap K'\subset E$. Since edges in $E$ are not parallel to the $xy$-plane, the angle between an edge in $E$ and the $xy$-plane is nonzero. Since edges in $E$ contain $P$, which belongs to two edges of $K_{\epsilon,\delta}^P$, by $(a)$ and $(d)$ in Lemma \ref{lem:smallK}, if $\delta$ is sufficiently small, then $E$ can not intersect $K_{\epsilon,\delta}^P-\{P\}$. Hence $K_{\epsilon,\delta}^P\cap K'=P$.
\end{proof}

Let $K$, $K'$, $P$, $E$ be as in Lemma \ref{lem:PUnion} and the proof. Denote the $\epsilon$-neighborhood of $P$ by $N_\epsilon(P)$. Then we can define the connected sum $K\#_PK'$ as following:

(i) choose $\epsilon$ sufficiently small such that $N_\epsilon(P)\cap K'\subset E$;

(ii) choose $\delta$ sufficiently small such that $K_{\epsilon,\delta}^P\cap K'=P$;

(iii) let $K_{\epsilon,\delta}^P\vee K'$ be the one point union of $K_{\epsilon,\delta}^P$ and $K'$ via $P$;

(iv) choose a way to resolve $P$ in $K_{\epsilon,\delta}^P\vee K'$, and get a one component circle.

In (iv), $P$ will be replaced by two points quite near $P$, see Figure \ref{fig:sum} (In the left picture $E$ contains one edge, and in the right picture $E$ contains two edges. In each picture the arrow shows how to resolve $P$). Clearly the knot $K\#_PK'$ has the knot type of a connected sum of $K$ and $K'$ in the usual sense.

\begin{figure}[h]
\centerline{\scalebox{0.6	}{\includegraphics{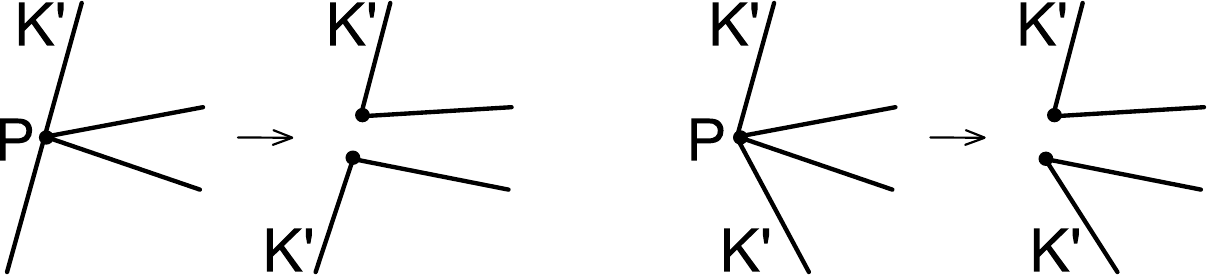}}}
\caption{From $K_{\epsilon,\delta}^P\vee K'$ to $K\#_P K'$}\label{fig:sum}
\end{figure}

\subsection{Counterexamples for general knots}

Let $K$ be a knot with $n$ edges, and let $\Pi$, $V$ be as above. When we choose $\Sigma$, $P$, $\epsilon$, $\delta$ and $T$ in Lemma \ref{lem:smallK} suitably, we can have $K_{\epsilon,\delta}^P$. Let $K_{\epsilon,\delta}^P=V_1V_2\cdots V_n$ and $V_1=P$. In following examples, $P$ will be a vertex of $K_6$ or $K_{14}$, and we will consider the knots $K\#_P K_6$ and $K\#_P K_{14}$ which have the knot type of $K$.

In each connected sum the point $P$ will be replaced by two points $P^1$ and $P^2$, and $K_{\epsilon,\delta}^P$ will become the broken line $P^1V_2\cdots V_nP^2$, which will be denoted by $\Omega$. And the interior of a broken line $M$ will be denoted by $M^\circ$.

\begin{proposition}\label{prop:general_with_self-intersection}
For any knot type $\mathcal{K}$ there is a polygonal knot $K_\ast$ of type $\mathcal{K}$ with $e(\mathcal{K})+6$ edges such that $\widehat{K_\ast}$ has self-intersections.
\end{proposition}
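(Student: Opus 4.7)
The plan is to construct $K_\ast = K \#_P K_6$ for a polygonal knot $K$ of type $\mathcal{K}$ in general position with $e(\mathcal{K})$ edges, where the vertex $P$ of $K_6$ is chosen so that the subarc of $K_6$ corresponding to the long segment $V_5V_1$ of $\widehat{K_6}$ survives intact in $K_\ast$. The key observation is that the unique quadrisecant $\ell$ of $K_6$ is the $x$-axis, and the four replacement segments $V_1V_2, V_2V_4, V_4V_5, V_5V_1$ of $\widehat{K_6}$ all lie on $\ell$: the first three tile the interval $[0,4]$, while $V_5V_1$ traverses the entire interval in reverse and therefore contains $V_2$ and $V_4$ in its interior. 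Consequently, if the analogous segment survives as a single edge of $\widehat{K_\ast}$, it passes through two vertices of $\widehat{K_\ast}$ in its interior, which exhibits self-intersections automatically.

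To carry this out, I would take $P$ to be any vertex of $K_6$ not on the subarc from $V_5$ through $W_6$ to $V_1$---concretely, $P = W_1 = (-\tfrac{1}{5}, -\tfrac{3}{10}, 0)$, whose distance from $\ell$ is $\tfrac{3}{10}$. Applying Lemmas \ref{lem:smallK} and \ref{lem:PUnion} with $\epsilon < \tfrac{3}{10}$, I would place an affine image $K^P_{\epsilon,\delta}$ of $K$ inside a small ball about $P$ and form $K_\ast = K \#_P K_6$ as in Section \ref{sec:general}, so that $e(K_\ast) = e(K) + e(K_6) = e(\mathcal{K}) + 6$ and the knot type is $\mathcal{K}$. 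Continuity then shows that $\ell$ is still a quadrisecant of $K_\ast$: the unmodified edges $W_2W_3, W_3W_4, W_5W_6$ still carry $V_2, V_4, V_5$; the modified edge $W_6P^2$ meets $\ell$ at a point $V_1'$ close to $V_1$; the modified edge $P^1W_2$ misses $\ell$ as does $W_1W_2$; and $K^P_{\epsilon,\delta}$ avoids $\ell$ by the choice of $\epsilon$. Assuming no additional $W$-point lies on the subarc of $K_\ast$ from $V_5$ to $V_1'$, the quadrisecant approximation $\widehat{K_\ast}$ contains $V_5V_1'$ as a single edge on $\ell$ that passes through $V_2$ and $V_4$---both vertices of $\widehat{K_\ast}$---in its interior, yielding the required self-intersections.

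The main obstacle will be the verification that no extra $W$-point lies on the subarc from $V_5$ to $V_1'$. One must exclude any quadrisecant of $K_\ast$, other than $\ell$, that meets the edges $W_5W_6$ or $W_6P^2$ at an interior point. Quadrisecants using four edges entirely from the $K_6$-part of $K_\ast$ are handled by continuity from the fact that $\ell$ is the sole quadrisecant of $K_6$. The delicate case is a ``mixed'' quadrisecant that uses some edges of $K^P_{\epsilon,\delta}$: for small $\epsilon$ such a line is essentially pinned through the tiny ball about $P$, and requiring it also to cross $W_5W_6$ or $W_6P^2$ imposes a codimension-one constraint on its direction. Invoking Lemma \ref{lem:smallK}(d), which forces lines meeting three or more edges of $K^P_{\epsilon,\delta}$ to be nearly parallel to $T(\Pi)$, together with a generic perturbation of $K$ within its knot type (or of the affine transformation $T$), I would rule out all such coincidences and complete the argument.
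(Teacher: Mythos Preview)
Your overall strategy matches the paper's: form $K\#_P K_6$, verify that the line $\ell$ through $V_1,V_2,V_4,V_5$ survives as a quadrisecant, and show that the arc through $W_6$ between the two outermost secant points acquires no further secant point, so that the resulting replacement segment in $\widehat{K_\ast}$ overlaps $\ell$ and forces a self-intersection. The paper does exactly this.

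The substantive difference is the choice of attachment vertex, and here your concrete pick $P=W_1$ creates real difficulties that the paper's choice $P=W_3$ avoids. First, the edge $W_6W_1$ lies in the plane $z=0$, so it is parallel to the $xy$-plane; this violates the hypothesis of Lemma~\ref{lem:PUnion}, so you cannot invoke that lemma to guarantee $K^P_{\epsilon,\delta}\cap K_6=\{P\}$. Second, and more seriously, the paper's exclusion of mixed quadrisecants is not a genericity argument but a separating-plane argument: with $P=W_3$ one takes $\Sigma$ to be the plane through $W_2,W_3,W_4$, which is exactly $\{y=0\}$, and Lemma~\ref{lem:smallK}(c) then pins $\Omega=K^P_{\epsilon,\delta}$ strictly to one side of $\Sigma$. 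A line meeting $\Omega^\circ$ twice can therefore cross $\Sigma$ at most once, while the remaining portion $\Gamma$ of $K_6$ lies in $\Sigma$ only along controlled pieces; this yields the two-case contradiction. With $P=W_1$ there is no analogous plane: the natural candidate through $W_6,W_1,W_2$ contains $W_6$ itself, so it cannot separate the critical arc $\Lambda$ from $\Omega$. Moreover, lines from the $\epsilon$-ball about $W_1$ to points of $\Lambda$ on the edge $W_6W_1$ are nearly horizontal (that edge has direction in the $xy$-plane), so Lemma~\ref{lem:smallK}(d) gives you nothing against a trisecant of $\Omega$ that also meets $\Lambda$ there. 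Your appeal to a ``codimension-one constraint'' and ``generic perturbation'' is not enough to close this case; one would need a genuine transversality argument tailored to this geometry, which you have not supplied.

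In short, the plan is right but the execution hinges on where you attach. Switching to $P=W_3$ and using the plane $\Sigma=\{y=0\}$ as in the paper turns the vague final paragraph into a two-case computation.
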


\begin{proof}
In $K_6$ we choose $P=W_3=(2,0,1)$, and let $\Sigma$ be the plane containing $W_2$, $W_3$, $W_4$. $P$ will be replaced by the two points $W_3^1=(2-\eta,0,1)$ and $W_3^2=(2+\eta,0,1)$, here $\eta>0$ is sufficiently small. Suppose that $P^1=W_3^1$ and $P^2=W_3^2$ such that the projection images of $P^1V_2$ and $P^2V_n$ in the $xy$-plane does not intersect. Then we can have the knot
\begin{align*}
K\#_P K_6=W_1W_2P^1V_2\cdots V_nP^2W_4W_5W_6.
\end{align*}
Let $\Gamma$ be the broken line $W_3^2W_4W_5W_6W_1W_2W_3^1$. It will have only one quadrisecant $L$. Let $L\cap W_1W_6=U_1$, $L\cap W_5W_6=U_2$, and let $\Lambda$ be the broken line $U_1W_6U_2$.

By the discussion in Section \ref{sec:algorithm}, we can perturb the vertices of $K\#_P K_6$ slightly to get a knot $K_\ast$, such that $K_\ast$ has finitely many quadrisecants. $\Omega$, $\Gamma$, $L$ and $\Lambda$ will be moved slightly. If there is a quadrisecant $L'$ of $K_\ast$ intersecting $\Lambda^\circ$, then it must intersect $\Omega^\circ$, since $L$ is the only quadrisecant of $\Gamma$. Then, if the $\delta$ in Lemma \ref{lem:smallK} is sufficiently small, then $L'\cap \Omega^\circ$ can contain at most two points.

Case 1: $L'\cap \Omega^\circ$ contains two points. By $(c)$ in Lemma \ref{lem:smallK}, if the $\epsilon$ and $\delta$ are sufficiently small, then the $y$-coordinates of all such intersection points will have a positive lower bound, and a straight line passing such points and intersecting $\Lambda^\circ$ can intersect $\Sigma$ at only one point. Then $L'$ can not be a quadrisecant of $K_\ast$.

Case 2: $L'\cap \Omega^\circ$ contains one point. Then $L'\cap \Sigma$ contains three points, and $L'$ must intersect $W_2P^1$ and $W_4P^2$. And it can not intersect $\Omega^\circ$, by $(c)$ in Lemma \ref{lem:smallK}.

The contradictions mean that no quadrisecant of $K_\ast$ can intersect $\Lambda^\circ$, and $U_1U_2$ will be an edge of $\widehat{K_\ast}$. Then $\widehat{K_\ast}$ will have self-intersections.
\end{proof}

\begin{proposition}\label{prop:general_no_trefoil}
For any knot type $\mathcal{K}$, there is a polygonal knot $K_\diamond$ of type $\mathcal{K}$ with $e(\mathcal{K})+14$ edges, such that $\widehat{K_\diamond}$ is a connected sum with the (left-handed) trefoil knot as a summand.
\end{proposition}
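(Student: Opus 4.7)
The plan is to imitate the proof of Proposition~\ref{prop:general_with_self-intersection} with $K_{14}$ in place of $K_6$. Given a polygonal representative $K$ of $\mathcal{K}$ with $e(\mathcal{K})$ edges, I would produce via Lemmas~\ref{lem:smallK} and~\ref{lem:PUnion} a small affine copy $K_{\epsilon,\delta}^P$ of $K$ attached at a carefully chosen vertex $P$ of $K_{14}$, with the inserted bulb $\Omega = P^1V_2\cdots V_nP^2$ sitting on one side of a separating plane $\Sigma$ containing the two edges of $K_{14}$ meeting $P$. The resulting connected sum $K_\diamond = K\#_P K_{14}$ is then a polygonal knot of type $\mathcal{K}$ with $e(\mathcal{K}) + 14$ edges. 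As in the proof of Proposition~\ref{prop:general_with_self-intersection}, denote by $\Gamma$ the broken line obtained from $K_{14}$ by splitting $P$ into $P^1,P^2$.

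Next I would show that $K_\diamond$ has exactly the four quadrisecants $L_1',L_2',L_3',L_4'$ produced in Section~\ref{sec:unknot}, with their secant points preserved up to an $O(\eta)$ shift on the (at most two) edges incident to $P$. Persistence is automatic for any $L_i'$ whose four secant edges avoid $P$; if some $L_i'$ does use an edge incident to $P$, then for sufficiently small $\eta$ its old interior secant point still lies in the relative interior of the shortened edge $W_{i-1}P^1$ or $P^2W_{i+1}$. To rule out any further quadrisecant one repeats the Case~1/Case~2 analysis of Proposition~\ref{prop:general_with_self-intersection}: an extra quadrisecant $L'$ would have to meet $\Omega^\circ$, since otherwise all four of its secant points would lie on $\Gamma$, forcing $L'$ to coincide with one of the $L_i'$. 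Conditions~$(c)$ and~$(d)$ of Lemma~\ref{lem:smallK} then yield a contradiction in both remaining cases: two points of $L'\cap\Omega^\circ$ produce a line meeting $\Sigma$ in at most one point, insufficient to reach an additional secant on $\Gamma$ away from the neighborhood of $P$; a single point of $L'\cap\Omega^\circ$ forces $L'$ to use both edges of $K_\diamond$ incident to $P$ together with at least one further edge, producing three intersections with $\Sigma$ and again violating~$(c)$.

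Consequently $\Omega$ lies in the interior of a single arc of $K_\diamond$ between two consecutive secant points $Q_j,Q_{j+1}$, namely the arc inherited from the arc of $K_{14}$ that originally contained $P$. Every other arc of $K_\diamond$ between consecutive secant points coincides with the corresponding arc of $K_{14}$, and the special arc through $\Omega$ is replaced in $\widehat{K_\diamond}$ by the segment $Q_jQ_{j+1}$ used by $\widehat{K_{14}}$ in that location (up to an $O(\eta)$ perturbation of at most two endpoints). Hence $\widehat{K_\diamond}$ is isotopic to $\widehat{K_{14}}$, which by Section~\ref{sec:unknot} is a left-handed trefoil; this is trivially a connected sum with the left-handed trefoil as a summand. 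The main obstacle, as in Proposition~\ref{prop:general_with_self-intersection}, is the case analysis excluding accidental quadrisecants through $\Omega$, which requires a sufficiently careful choice of the parameters $\epsilon$, $\delta$, and $\eta$ and a judicious choice of the vertex $P$ on $K_{14}$.
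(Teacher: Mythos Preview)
Your overall plan is the right one, but the central claim---that $K_\diamond$ has \emph{exactly} the four quadrisecants $L_1',\ldots,L_4'$ inherited from $K_{14}$---is too strong, and the Case~1/Case~2 analysis as you describe it does not establish it. The issue is with quadrisecants lying entirely (or largely) inside $\Omega$. The arc $\Omega$ is an affine image of $K$ with one vertex removed; since affine maps preserve collinearity, any quadrisecant of $K$ avoiding that vertex survives as a quadrisecant of $\Omega$, hence of $K_\diamond$. For nontrivial $\mathcal{K}$ such quadrisecants always exist. Your Case~1/Case~2 argument implicitly uses condition~$(d)$ of Lemma~\ref{lem:smallK} to bound $|L'\cap\Omega^\circ|\le 2$, but that bound only applies to lines making a definite angle with the $xy$-plane; a quadrisecant confined to $\Omega$ is nearly horizontal and is not excluded. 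Consequently your conclusion that $\Omega$ lies in a single arc between consecutive secant points, and that $\widehat{K_\diamond}\simeq\widehat{K_{14}}$, is unjustified.

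The paper avoids this by asking for much less: it does not control all quadrisecants of $K_\diamond$, only those meeting a specific short arc $\Lambda = U_1W_{14}W_{13}U_2$ of $\Gamma$ lying far from $P$. Any quadrisecant meeting $\Lambda^\circ$ must also meet $\Omega^\circ$ (since $\Gamma$ alone has no quadrisecant through $\Lambda^\circ$), and such a line \emph{does} make a definite angle with the $xy$-plane, so condition~$(d)$ legitimately bounds $|L'\cap\Omega^\circ|\le 2$ and the Case~1/Case~2 dichotomy goes through. The conclusion is only that $U_1U_2$ is an edge of $\widehat{K_\diamond}$; whatever the quadrisecant approximation does to $\Omega$ stays inside $N_\epsilon(P)$, so $\widehat{K_\diamond}$ is a connected sum of a trefoil with \emph{some} knot $K'$ (not necessarily the unknot). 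That weaker statement is exactly what the proposition asserts.
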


\begin{proof}
In $K_{14}$ we choose $P=W_7=(10,-1,-8)$, and let $\Sigma$ be the plane containing $W_6$, $W_7$, $W_8$. $P$ will be replaced by the two points $W_7^1=(10-10\eta,\eta-1,-8)$ and $W_7^2=W_7$, here $\eta>0$ is sufficiently small. Suppose $P^1=W_7^1$ and $P^2=W_7^2$ such that the projection images of $P^1V_2$ and $P^2V_n$ in the $xy$-plane does not intersect. Then we can have the knot
\begin{align*}
K\#_P K_{14}=W_1W_2\cdots W_6P^1V_2\cdots V_nP^2W_8W_9\cdots W_{14}.
\end{align*}
Let $\Gamma$ be the polygonal knot $W_7^2W_8W_9\cdots W_{14}W_1W_2\cdots W_6W_7^1$. It will have the same set of quadrisecants of $K_{14}$, namely $L_1$, $L_2$, $L_3$ and some $L_4$ described in Section \ref{sec:unknot}. Let $L_1\cap W_1W_{14}=U_1$, $L_2\cap W_{12}W_{13}=U_2$, and let $\Lambda$ be the broken line $U_1W_{14}W_{13}U_2$.

By the discussion in Section \ref{sec:algorithm}, we can perturb the vertices of $K\#_P K_{14}$ slightly to get a knot $K_\diamond$, such that $K_\diamond$ has finitely many quadrisecants. $\Omega$, $\Gamma$, $\Lambda$ and the four quadrisecants $L_1$, $L_2$, $L_3$, $L_4$ will be moved slightly. If there is a quadrisecant $L'$ of $K_\diamond$ intersecting $\Lambda^\circ$, then it must intersect $\Omega^\circ$, since no quadrisecant of $\Gamma$ can intersect $\Lambda^\circ$. Then, if the $\delta$ in Lemma \ref{lem:smallK} is sufficiently small, then $L'\cap \Omega^\circ$ contain at most two points.

Case 1: $L'\cap \Omega^\circ$ contains two points. By $(c)$ in Lemma \ref{lem:smallK}, if the $\epsilon$ and $\delta$ are sufficiently small, then a straight line passing such points and intersecting $\Lambda^\circ$ can intersect $\Sigma$ at only one point. And $L'$ can not be a quadrisecant of $K_\diamond$.

Case 2: $L'\cap \Omega^\circ$ contains one point. Then $L'\cap \Sigma$ contains three points, and $L'$ must intersect $W_6P^1$ and $W_8P^2$. And it can not intersect $\Omega^\circ$, by $(c)$ in Lemma \ref{lem:smallK}.

The contradictions mean that no quadrisecant of $K_\diamond$ can intersect $\Lambda^\circ$, and $U_1U_2$ will be an edge of $\widehat{K_\diamond}$. Since $\Omega\subset N_\epsilon(P)$, $\widehat{K_\diamond}$ will be a connected sum of a trefoil knot and some knot $K'$.
\end{proof}

If the knot $K$ in the above proposition does not contain the trefoil knot as a connected summand (for example if $K$ is a prime knot other than the trefoil knot), then $\widehat{K_\diamond}$ will have the knot type different from $K$. For arbitrary knot type we have the following.

\begin{proposition}\label{prop:general}
For any polygonal knot $K$ with $n$ edges, there is a polygonal knot $K_\diamond^1$ with the same knot type as $K$ and $5\floor*{\frac{n+1}2}+14$ edges, such that $\widehat{K_\diamond^1}$ has the knot type of a connected sum of $K$ and a (left-handed) trefoil knot.
\end{proposition}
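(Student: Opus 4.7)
The plan is to apply the construction of Proposition \ref{prop:general_no_trefoil} to a modified polygonal knot $K^{\ast}$ of knot type $K$, chosen so that (i) $K^{\ast}$ has at most $5\floor*{\frac{n+1}{2}}$ edges, (ii) the quadrisecant approximation $\widehat{K^{\ast}}$ has the knot type of $K^{\ast}$, i.e.\ Conjecture \ref{conj:ori} holds for $K^{\ast}$ itself, and (iii) the connected sum with a sufficiently small copy of $K_{14}$ does not introduce quadrisecants involving edges of both factors. Then the quadrisecant approximation of $K_{\diamond}^{1}=K^{\ast}\#_{P}K_{14}$ decomposes, up to knot type, as the connected sum of $\widehat{K^{\ast}}$ with $\widehat{K_{14}}$; by (ii) and the trefoil conclusion from Section \ref{sec:unknot}, this has the knot type of $K\#T_{L}$.

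For the modification step, I would group the edges of $K=V_{1}V_{2}\cdots V_{n}$ into $\floor*{\frac{n+1}{2}}$ consecutive pairs and replace each pair $V_{2i-1}V_{2i}V_{2i+1}$ with a 5-edge zig-zag $V_{2i-1}A_{i}^{1}A_{i}^{2}A_{i}^{3}A_{i}^{4}V_{2i+1}$ contained in a small $\eta$-ball around $V_{2i}$ (with a single-edge pair if $n$ is odd). For sufficiently small $\eta$ and generic placement of the new vertices, $K^{\ast}$ is ambient-isotopic to $K$, has at most $5\floor*{\frac{n+1}{2}}$ edges, and satisfies the general-position conditions $(a')$ and $(b')$ of Section \ref{sec:algorithm}. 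By further tuning the zig-zag directions, I would arrange that every quadrisecant of $K^{\ast}$ has all four intersection points inside a single zig-zag block, and that inside each block the effect of the approximation is a small ambient-isotopy only, so that $\widehat{K^{\ast}}$ has the knot type of $K^{\ast}$.

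With such a $K^{\ast}$ in hand, I apply the construction of Proposition \ref{prop:general_no_trefoil}: pick a vertex of $K^{\ast}$ whose projection lies on the convex hull of $K^{\ast}_{\Pi}$ for a suitable plane $\Pi$, and form $K_{\diamond}^{1}=K^{\ast}\#_{P}K_{14}$ at $P=W_{7}$ using Lemmas \ref{lem:smallK} and \ref{lem:PUnion} with $\epsilon,\delta$ sufficiently small. The resulting knot has $5\floor*{\frac{n+1}{2}}+14$ edges and is ambient-isotopic to $K$. Exactly as in the proof of Proposition \ref{prop:general_no_trefoil}, the two-case analysis on quadrisecants $L'$ meeting the bridge arc $\Lambda$ shows no such quadrisecant can exist, so the quadrisecants of $K_{\diamond}^{1}$ split cleanly into those of $K^{\ast}$ and those coming from the scaled copy of $K_{14}$. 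Hence $\widehat{K_{\diamond}^{1}}$ coincides with $\widehat{K^{\ast}}$ outside a small $\epsilon$-neighborhood of $P$ and with $\widehat{K_{14}}$ inside it, yielding a connected sum with knot type $K^{\ast}\#T_{L}\simeq K\#T_{L}$.

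The main obstacle is the construction of $K^{\ast}$ with the property that $\widehat{K^{\ast}}$ is ambient-isotopic to $K^{\ast}$. Designing the 5-edge zig-zag so that (a) no quadrisecant of $K^{\ast}$ spans multiple zig-zag blocks -- ruled out by a generic-position argument on edge-directions using the polynomial identities and the quadratic equation $(\ref{eq:ABC})$ of Section \ref{sec:algorithm} -- and (b) the local approximation within each block is a small ambient-isotopy rather than a knot-type-changing move -- verified by a Mathematica-assisted analysis analogous to that of $K_{14}$ in Section \ref{sec:unknot} -- is the technical heart of the argument. The constant $5$ is forced by the need to simultaneously preserve the knot type of the pair of edges being replaced and to leave enough parametric freedom for (a) and (b) to hold.
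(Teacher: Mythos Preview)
Your strategy for building $K^{\ast}$ tries to \emph{confine} all quadrisecants to small balls near the alternate vertices $V_{2i}$, but this is both unachievable and, even if granted, insufficient.  Unachievable: if $K$ is nontrivial it already has quadrisecants (Pannwitz), and since your long edges $V_{2i-1}A_i^1$ and $A_i^4V_{2i+1}$ are $C^0$-close to the original edges of $K$, those quadrisecants persist in $K^{\ast}$ and meet four different blocks.  The quadratic equation~(\ref{eq:ABC}) and generic-position conditions of Section~\ref{sec:algorithm} can make the set of quadrisecants finite, never empty, so your ``generic-position argument on edge-directions'' cannot do what you ask of it.  Insufficient: suppose all secant points did lie in the balls near the $V_{2i}$.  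Then the arc of $K^{\ast}$ running from near $V_{2i}$ through $V_{2i+1}$ to near $V_{2i+2}$ carries no secant point and is replaced in $\widehat{K^{\ast}}$ by a single chord, cutting off the corner at $V_{2i+1}$.  For a minimal-edge representative of $\mathcal{K}$ this corner-cutting typically changes the knot type (e.g.\ a hexagonal trefoil would drop to $\le 5$ edges).

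The paper takes the opposite tack: rather than suppressing quadrisecants, it \emph{plants} them.  Using the $K_6$ mechanism of Section~\ref{sec:unknot}, it inserts a small bump into every other edge of $K$; each bump produces a quadrisecant whose four secant points land on the bump itself and on the two neighbouring unmodified edges (cf.\ Figure~\ref{fig:general_knot_1}).  Thus every edge of $K$ acquires secant points, consecutive secant points along $K^1$ are at most one original edge apart, and the chord between them lies in a thin tube around $K$.  Hence $\widehat{K^1}$ is Hausdorff-close to $K$ \emph{regardless of whatever other quadrisecants exist}---extra secant points only refine the approximation.  The $K_{14}$ connected sum then proceeds as in Proposition~\ref{prop:general_no_trefoil}; since affine maps preserve quadrisecants, the planted ones survive in $\Omega$ and force $\widehat{\Omega}$ to stay close to the scaled copy of $K$.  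The idea you are missing is to control $\widehat{K^{\ast}}$ by guaranteeing \emph{enough} well-placed secant points, not by trying to have few.
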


\begin{proof}
The example of $K_6$ in Section \ref{sec:unknot} means that if we suitably replace a line segment in a chosen edge of $K$ by two edges and suitably perturb the vertices, then there will be a quadrisecant $L$ such that the union of line segments in $L$ between the secant points is quite close to the chosen edge, as illustrated in Figure \ref{fig:general_knot_1}.
\begin{figure}[htbp]
\begin{center}
\subfloat[]{
\label{fig:general_knot_1}
\includegraphics{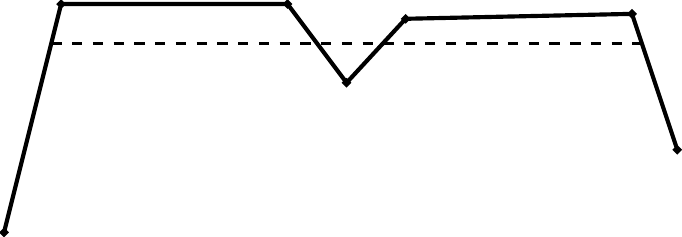}}
\hspace{20pt}
\subfloat[]{
\label{fig:general_knot_2}
\includegraphics{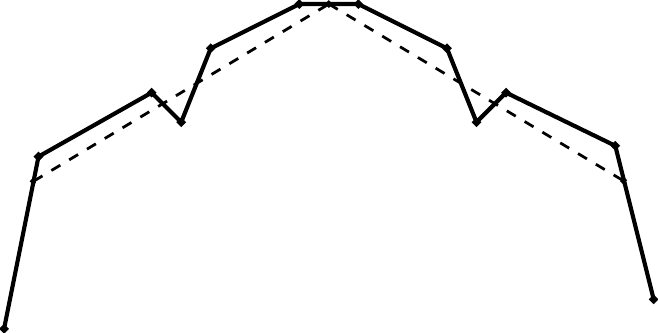}}
\end{center}
\caption{General knots. The dashed edge is the original edge, while the solid edges are the new edges.}\label{fig:general_knot}
\end{figure}
Here `quite close' means that the Hausdorff distance between the two sets is sufficiently small.

We do this procedure successively for every other edge of $K$, such that each time the replacement happens in a sufficiently small neighborhood of the edge. When $n$ is odd, we need change two adjacent edges and there will be one more edge at their comment vertex, as in Figure \ref{fig:general_knot_2}. We get a knot $K^1$ sufficiently close to $K$ such that $\widehat{K^1}$ is also quite close to $K$. In particular, both $K^1$ and $\widehat{K^1}$ will have the same knot type of $K$.

$K^1$ has $5\floor*{\frac{n+1}2}$ edges. By the construction as in Proposition \ref{prop:general_no_trefoil}, we can get a $K_\diamond^1$ from $K^1$. Since quadrisecants will be preserved under affine transformations, the $\Omega$ part will not change too much under the quadrisecant approximation. Then $\widehat{K_\diamond^1}$ has the knot type of a connected sum of $K$ and the left-handed trefoil knot.
\end{proof}

\begin{proof}[Proof of Theorem \ref{thm:main}] It evidently follows from Propositions \ref{prop:general_with_self-intersection}, \ref{prop:general_no_trefoil} and \ref{prop:general}.
\end{proof}

\bibliographystyle{amsalpha}

\end{document}